\title{Strong dependence, weight, and measure}
\date{May 1st, 2010}
\author{Anand Pillay\thanks{Supported
by EPSRC grant EP/F009712/1, as well as an Invited Professorship at Universit\'e Paris-Sud 11 in March-April 2010.}\\University of Leeds}
\newtheorem{Theorem}{Theorem}[section]
\newtheorem{Proposition}[Theorem]{Proposition}
\newtheorem{Definition}[Theorem]{Definition} 
\newtheorem{Remark}[Theorem]{Remark}
\newtheorem{Lemma}[Theorem]{Lemma}
\newtheorem{Corollary}[Theorem]{Corollary}
\newtheorem{Fact}[Theorem]{Fact}
\newtheorem{Question}[Theorem]{Question}
\begin{document}
\maketitle

\begin{abstract} 
For a first order theory $T$ with $NIP$ I try to give an account of Shelah's notion of strong dependence, in terms
of suitable generically stable measures, forking, and ``weight".
\end{abstract}

\section{Introduction}
Shelah \cite{Shelah783} introduced the notion ``$T$ is strongly dependent" as an attempt to find an analogue of {\em superstability} in $NIP$ theories. When $T$ is stable, strong dependence is actually equivalent to ``all types have finite weight" (rather than superstability).  Here I give a version of this equivalence in the general $NIP$ context using suitable generically stable measures (see Theorem 1.1). There are two strong influences on this work. The first is the preprint \cite{OU} by Onshuus and Usvyatsov, which investigates strong dependence using an appropriate version of weight for types. The second is a talk by Hrushovski in Oberwolfach in January 2010 where he presented some tentative notions of ``finite weight" using generically stable measures. The results in the current paper are not hard and represent mainly a translation of certain notions into the language of measures. I also ask several questions, and hope to subsequently obtain a tighter account
of the relationship between the three notions in the title, depending on a better understanding of generically stable measures among other things. 
Thanks to Itai Ben Yaacov, Ehud Hrushovski, and Pierre Simon, for helpful discussions and communications.

In the remainder of this introduction, I will give an informal description of the basic notions, and then state the main result Theorem 1.1. In section 2 I will recall some material from \cite{NIPII}, \cite{NIPIII}, as well as giving precise definitions. In  section 3 I discuss {\em average measures} and prove some preliminary results. In section 4 I prove Theorem 1.1. I will assume a familiarity with stability theory, the ``stability-theoretic" approach to $NIP$ theories, as well as the notion of a Keisler measure. References are \cite{Pillay-book}, \cite{NIPI}, \cite{NIPII}, as well as papers of Shelah such as  \cite{Shelah863}. 
We will also be referring to Adler's paper \cite{Adler} which gives a nice treatment of the combinatorial notions around strong dependence, and makes explicit the connection with weight in the stable case.

Concerning notation, we work in a very saturated model ${\bar M}$ of a complete first order theory $T$ in 
language $L$. There is no harm to work in ${\bar M}^{eq}$, except that at some point we might want to make definitions concerning a given sort. $x,y,z,..$ denote finite tuples of variables. Likewise $a,b,c,..$ denote finite tuples of elements, and $M_{0}, M,..$ normally denote small elementary substructures of ${\bar M}$.

Recall that $T$ has $NIP$ (or is dependent) if for any indiscernible (over $\emptyset$) $(a_{i}:i<\omega)$, and formula $\phi(x,b)$, the truth value of $\phi(a_{i},b)$ is eventually constant. I will make a blanket assumption, at least in this introduction, that $T$ has $NIP$.

Our working definition of ``$T$  strongly dependent" (or ``strongly $NIP$") if there do NOT exist formulas $\phi_{\alpha}(x,\alpha)$, $k_{\alpha} < \omega$ and tuples $b^{\alpha}_{i}$, for $\alpha< \omega$, $i< \omega$, such that for each $\alpha$, $\{\phi_{\alpha}(x,b^{\alpha}_{i}):i<\omega\}$ is $k_{\alpha}$-inconsistent (every subset of size $k_{\alpha}$ is inconsistent), and for each $\eta\in \omega^{\omega}$, $\{\phi_{\alpha}(x,b^{\alpha}_{\eta(\alpha)}):\alpha < \omega\}$ is consistent. This is equivalent to Shelah's original definition assuming that $T$ has $NIP$. 

If $I = (a_{i}:i<\omega)$ is indiscernible over $\emptyset$, it is well-known that (assuming $NIP$) we can form the the average type $Avtp(I/A)$ over any set $A$, which is by definition the collection of formulas over $A$ eventually true of the $a_{i}$. More relevant to this paper is a construction in \cite{NIPIII} of the average {\em measure} of an indiscernible segment: If $I = (a_{i}:i\in [0,1])$ is an indiscernible {\em segment}, we can form the global average Keisler measure of $I$, $\mu_{I} = Avms(I/{\bar M})$, where for $\phi(x)$ a formula over ${\bar M}$, $\mu_{I}(\phi(x))$ is the Lebesgue measure of 
$\{i\in [0,1]: \models\phi(a_{i})\}$.  Such $\mu_{I}$ is a special case of a global {\em generically stable} measure, namely a global Keisler measure which is both finitely satisfiable in and definable over a small set (in this case $I$). By an average measure over $M_{0}$ we mean something of the form $\mu|M_{0}$ where $\mu$ is a global average measure which is $M_{0}$-invariant, or equivalently does not fork over $M_{0}$. More generally by a generically stable measure over $M_{0}$, we mean something of the form $\mu|M_{0}$ where $\mu$ is a (global) generically stable measure which is $M_{0}$-invariant. We note from \cite{NIPIII}  that any global generically stable measure $\mu$ which is $M_{0}$-invariant is the unique global nonforking (or $M_{0}$-invariant) extension of $\mu|M_{0}$. Namely any generically stable measure over $M_{0}$ has a {\em unique} global nonforking extension (where recall that a measure $\mu$ over $A$ does not fork over $M_{0}\subseteq A$ if any formula $\phi(x)$ with positive $\mu$-measure does not fork, or equivalently does not divide, over $M_{0}$). 

If $\mu_{x}$, $\lambda_{y}$ are Keisler measures over a small model $M_{0}$ and $\omega_{x,y}$ is a Keisler measure over $M_{0}$ extending 
$\mu \cup \lambda$, we can think of $\omega_{x,y}$ as an extension of $\mu_{x}$ over $\lambda_{y}$ and also as an extension of $\lambda_{y}$ over $\mu_{x}$. In section 2 we give a natural definition of the notion  ``$\omega_{x,y}$ is a {\em nonforking} extension of $\mu_{x}$ over $\lambda_{y}$".
In the case where  $\mu_{x}$ is generically stable, it will have a unique nonforking extension over any measure $\lambda_{y}$, which will coincide with the unique nonforking extension of $\lambda_{y}$ over $\mu_{x}$, and we write it as $\mu_{x}\otimes \lambda_{y}$ ($ = \lambda_{y}\otimes \mu_{x}$).

A key notion for the current paper is that of a {\em strong forking extension} of $\mu_{x}$ over $\lambda_{y}$. Namely an extension $\omega_{x,y}$ of $\mu$ over $\lambda$ is  a {\em strong forking extension} if there is a formula $\phi(x,y)$ over $M_{0}$ such that (i)  $\omega(\phi(x,y)) = 1$, and (ii) for ANY nonforking extension $\omega'(x,y)$ of $\mu_{x}$  (over some $\nu_{y})$), $\omega'(\phi(x,y)) = 0$.
So a strong forking extension of $\mu$ is a measure (in $x,y$ over $M_{0}$) which is 
``uniformly" orthogonal, in the sense of measure theory, to every nonforking extension $\omega'_{x,y}$ of $\mu_{x}$. We will mainly use this notion when $\mu_{x}$ is generically stable.

In a stable theory any forking extension of a strong type is a strong forking extension. 

Our main result is:
\begin{Theorem} Suppose $T$ has $NIP$. The the following are equivalent:
\newline
(1) $T$ is strongly dependent,
\newline
(2) It is NOT the case that there exist $M_{0}$ and average measures $\mu_{\alpha}(y_{\alpha})$ over $M_{0}$ for $\alpha < \omega$, and  measure $\omega_{x,y_{0},y_{1},.....}$ over $M_{0}$ such that
\newline
(i) $\omega_{x,y_{0},...} \supseteq  \otimes_{\alpha < \omega} \mu_{\alpha}(y_{\alpha})$,
\newline
(ii) for each $\alpha< \omega$, $\omega_{\alpha}(x,y_{\alpha})$ is a strong forking extension of $\mu_{\alpha}(x)$  (where $\omega_{\alpha}$ is just the restriction of $\omega$ to  $(x,y_{\alpha})$). 
\end{Theorem}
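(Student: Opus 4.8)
The plan is to prove the two contrapositives: that the Shelah‑style witnesses to the failure of strong dependence exist if and only if the measure‑theoretic configuration of (2) exists. Throughout I understand ``strong forking extension of $\mu_\alpha(x)$'' in (2)(ii) to refer to $\mu_\alpha$ as the generically stable \emph{base} of the strong forking extension $\omega_\alpha$ --- in the variable $y_\alpha$, given (2)(i) --- with nonforking extensions taken over measures in $x$; given the definitions of strong forking extension and of $\otimes$, this is the reading compatible with (2)(i).

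\emph{From the failure of (1) to the failure of (2).} Start from formulas $\phi_\alpha(x,y_\alpha)$, integers $k_\alpha$, and an array $(b^\alpha_i)_{\alpha<\omega,\,i<\omega}$ with each row $k_\alpha$-inconsistent and every path consistent. First apply the standard extraction lemma to make the rows mutually indiscernible, preserving $k_\alpha$-inconsistency of each row and (by compactness) the consistency of every \emph{finite} sub-path; then stretch each row to an indiscernible segment $I_\alpha=(b^\alpha_t : t\in[0,1])$, and pick a small model $M_0$ over which this whole configuration is set up so that the $\mu_{I_\alpha}$ are average measures over $M_0$. Put $\mu_\alpha:=\mu_{I_\alpha}$; by \cite{NIPIII} the product $\otimes_{\alpha<\omega}\mu_\alpha$ is again generically stable. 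Let $(\bar d_j=(d^\alpha_j)_{\alpha})_{j<\omega}$ be a Morley sequence of $\otimes_\alpha\mu_\alpha$ over $M_0$. Using mutual indiscernibility together with the fact that definable subsets of an $NIP$ indiscernible segment are finite unions of intervals, one checks that each finite tuple $(d^\alpha_j)_{\alpha\in F}$ has the same type over $M_0$ as $(b^\alpha_{t_\alpha})_{\alpha\in F}$ for generic distinct reals $t_\alpha$, whence the (transferred) finite sub-path consistency shows $\{\phi_\alpha(x,d^\alpha_j):\alpha<\omega\}$ is consistent; choose $a_j$ realizing it. Now let $\omega$ be the average Keisler measure over $M_0$ of the sequence $((a_j,\bar d_j))_{j<\omega}$. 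Its restriction to the variables $(y_\alpha)$ is $\otimes_\alpha\mu_\alpha$, since a generically stable measure is recovered as the limit of the averages along any of its Morley sequences; this gives (2)(i). And $\omega(\phi_\alpha(x,y_\alpha))=1$ for every $\alpha$, because $\models\phi_\alpha(a_j,d^\alpha_j)$ for all $j$. Finally each $\omega_\alpha$ is a strong forking extension witnessed by $\phi_\alpha$: $\omega_\alpha(\phi_\alpha)=1$, while for any nonforking extension $\omega'$ of $\mu_\alpha(y_\alpha)$ over a measure $\nu_x$, uniqueness of nonforking extensions of a generically stable measure gives $\omega'=\nu_x\otimes\mu_\alpha(y_\alpha)$, so $\omega'(\phi_\alpha)=\int_x \mu_\alpha\big(\phi_\alpha(a,y_\alpha)\big)\,d\nu(a)=0$, the integrand vanishing because $k_\alpha$-inconsistency of $\{\phi_\alpha(x,b^\alpha_t):t\in[0,1]\}$ forces each $a$ to satisfy only finitely many, hence Lebesgue-null-many, of the $\phi_\alpha(a,b^\alpha_t)$. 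This is (2)(ii).

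\emph{From the failure of (2) to the failure of (1).} Suppose $M_0$, average measures $\mu_\alpha=\mu_{I_\alpha}|M_0$ (with $I_\alpha=(e^\alpha_t:t\in[0,1])$ an indiscernible segment), and $\omega$ with witnessing formulas $\phi_\alpha(x,y_\alpha)$ as in (2)(ii) are given; set $\psi_\alpha:=\phi_\alpha$. For the rows: that $\omega_\alpha$ is a strong forking extension of $\mu_\alpha(y_\alpha)$ via $\phi_\alpha$ means every $\nu_x\otimes\mu_\alpha(y_\alpha)$ assigns $\phi_\alpha$ measure $0$, which by the product formula forces $\mu_\alpha(\phi_\alpha(a,y_\alpha))=0$, i.e.\ $\{t\in[0,1]:\models\phi_\alpha(a,e^\alpha_t)\}$ is Lebesgue-null, for \emph{every} $a$. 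By $NIP$ this set is a union of at most $L_\alpha$ intervals with $L_\alpha$ uniform in $a$; being null it has at most $L_\alpha$ points, so $\{\phi_\alpha(x,e^\alpha_t):t\in[0,1]\}$ is $(L_\alpha+1)$-inconsistent --- take $k_\alpha:=L_\alpha+1$. For the paths: fix finite $F$. Since $\omega(\phi_\alpha(x,y_\alpha))=\omega_\alpha(\phi_\alpha)=1$ for each $\alpha$, also $\omega\big(\bigwedge_{\alpha\in F}\phi_\alpha(x,y_\alpha)\big)=1$, hence $\omega\big(\exists x\,\bigwedge_{\alpha\in F}\phi_\alpha(x,y_\alpha)\big)=1$; but $\exists x\,\bigwedge_{\alpha\in F}\phi_\alpha(x,y_\alpha)$ involves only the $y_\alpha$'s, and by (2)(i) the marginal of $\omega$ there is $\otimes_{\alpha\in F}\mu_\alpha$, so this formula has $\otimes_{\alpha\in F}\mu_\alpha$-measure $1$; translating to product Lebesgue measure on $\prod_{\alpha\in F}[0,1]$ gives that $\{\phi_\alpha(x,e^\alpha_{s_\alpha}):\alpha\in F\}$ is consistent for almost all $(s_\alpha)$. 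There are only countably many pairs $(F,f:F\to\omega)$, and choosing reals $t^\alpha_i\in[0,1]$ independently and uniformly, with probability one every tuple $(t^\alpha_{f(\alpha)})_{\alpha\in F}$ avoids the corresponding null set; fix such $(t^\alpha_i)$ and set $b^\alpha_i:=e^\alpha_{t^\alpha_i}$. Then each row $\{\phi_\alpha(x,b^\alpha_i):i<\omega\}$ is $k_\alpha$-inconsistent and every finite sub-path is consistent, so by compactness every path is consistent; thus $\phi_\alpha,k_\alpha,b^\alpha_i$ witness the failure of (1).

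\emph{Main obstacle.} I expect the construction of $\omega$ in the first direction to be the delicate part: one must extract mutually indiscernible segments retaining \emph{both} the $k_\alpha$-inconsistency and all finite sub-path consistencies, choose $M_0$ so that each $\mu_{I_\alpha}$ is an average measure over it and the infinite product is defined and generically stable, and verify that averaging $((a_j,\bar d_j))_j$ reproduces $\otimes_\alpha\mu_\alpha$ on the $y$-coordinates --- all of which leans on the machinery of \cite{NIPIII} (products of generically stable measures, their Morley sequences, and the Fubini property for them). A secondary subtlety, in the other direction, is the passage from mere Lebesgue-nullity of $\{t:\models\phi_\alpha(a,e^\alpha_t)\}$ to a \emph{uniform} finite bound $k_\alpha$; this is exactly where $NIP$ enters, and is what makes the measure condition equivalent to Shelah's $k_\alpha$-inconsistency rather than to bare inconsistency.
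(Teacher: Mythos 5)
The overall strategy (translating back and forth between Shelah-style arrays and indiscernible segments with their average measures) is the right one, but your construction of $\omega$ in the first direction has a genuine gap. You define $\omega$ as ``the average Keisler measure of the sequence $((a_j,\bar d_j))_{j<\omega}$''; this sequence is not indiscernible, so the averages $\frac{1}{N}\sum_{j<N}\delta_{(a_j,\bar d_j)}$ need not converge, and even after passing to a subnet the claim that the $y$-marginal equals $\otimes_\alpha\mu_\alpha$ rests on ``a generically stable measure is recovered as the limit of averages along any of its Morley sequences''. For measures (as opposed to generically stable types) that is the fim property --- a substantial theorem in its own right --- and applying it to a single realized sequence is delicate: a weakly random realization of $\mu^{(\omega)}$ is only guaranteed to avoid \emph{definable} null sets, whereas the set of sequences whose empirical averages fail to converge to $\mu$ is merely a Borel null set. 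None of this machinery is needed. Once you know, for each $n$, that $(\otimes_{\alpha<n}\mu_\alpha)(\exists x\bigwedge_{\alpha<n}\phi_\alpha(x,y_\alpha))=1$ (your path-consistency for all tuples from the segments gives this at once), a soft extension argument --- lift $\otimes_{\alpha<n}\mu_\alpha$ through the projection of the closed set $[\bigwedge_{\alpha<n}\phi_\alpha]$ onto $[\exists x\bigwedge_{\alpha<n}\phi_\alpha]$ and take a limit over $n$ --- produces \emph{some} $\omega\supseteq\otimes_\alpha\mu_\alpha$ with $\omega(\phi_\alpha(x,y_\alpha))=1$ for all $\alpha$, which is all that clause (2) requires. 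This is exactly what the paper does via its Claims 1 and 2 in Section 4.

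Your second direction is essentially correct and in places genuinely different from the paper. For the rows, the paper derives $k_\alpha$-inconsistency from Shelah's decomposition theorem for strongly dependent theories (Fact 2.4, via Lemma 3.4), splitting $I_\alpha$ into finitely many pieces indiscernible over the parameters of $\phi_\alpha$; you instead invoke the uniform $NIP$ bound on the number of convex components of $\{t:\models\phi_\alpha(a,e^\alpha_t)\}$ (absorbing the parameters of $\phi_\alpha$ into the parameter side of the underlying $L$-formula) and observe that a Lebesgue-null set with boundedly many convex components is a finite set of bounded size. That argument is correct, and it has the notable consequence that this step needs only $NIP$ rather than strong dependence. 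For the paths, you replace the paper's Proposition 3.8 (inductive extraction of weakly random sequences from a Borel set of product measure $1$) by an identification of $\otimes_{\alpha\in F}\mu_{I_\alpha}$ with product Lebesgue measure on $\prod_{\alpha\in F}[0,1]$ together with a probabilistic choice of indices. The random-choice step is fine, but the Fubini-type identification is asserted rather than proved; it is true for average measures but is essentially the content of Lemma 3.6 and Proposition 3.8, so you should either prove it (by induction on $|F|$, using Borel definability and the fact that $\mu_{I_\alpha}|M_0$ is the pushforward of Lebesgue measure under $t\mapsto tp(e^\alpha_t/M_0)$) or simply quote the paper's proposition.
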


\begin{Remark}
(i)  Can we replace  ``average measures" in (2) by ``generically stable measures"? Of course with this change (2) implies (1) still holds but the problem is to prove (1) implies (2).
\newline
(ii) One can also give a version of Theorem 1.1 with average types $p_{\alpha}$ over $M_{0}$ in place of average measures (close to formulations in \cite{OU}), but with a less canonical notion of strong forking.
\end{Remark}

Finally in this introduction we discuss the meaning of Theorem 1.1 in terms of weight.
In a stable theory $T$, we say that a type $p(x)\in S(A)$ has {\em 
almost finite weight} if (possibly replacing $p$ by a nonforking extension) there do not exist an infinite $A$-independent 
set $\{b_{i}:i<\omega\}$ and realization $c$ of $p$ such that $c$ forks with $b_{i}$ over $A$ for all $i$. In fact, assuming 
stability, if $p$ has almost finite weight, then (again even ranging over nonforking extensions of $p$) there is a finite 
bound on the cardinality of any $A$-independent set $\{b_{i}:i\in I\}$ for which $c$ forks with $b_{i}$ over $M$ for all $i$. 
The greatest such cardinality is called the weight of $p$.  

Now in a stable theory and working with complete types over a model, types, average types and  average measures all coincide. So in (2) of Theorem 1.1, 
the $\mu_{\alpha}$ are already types. Replacing $\omega$ by a type and letting $(c,b_{\alpha})_{\alpha}$ be its realization, (2) says precisely that there do not exist a model $M_{0}$ and $c,b_{\alpha}$ for $\alpha < \omega$ such that $\{b_{\alpha}:\alpha < \omega\}$ is $M_{0}$-independent, and $tp(b_{\alpha}/M_{0},c)$ forks over $M_{0}$ for all $\alpha$. 

The fact that for $T$ stable, strong $NIP$ is equivalent to all types having finite weight, was pointed out already by Adler \cite{Adler}.

In any case our formulations in Theorem 1.1 suggest reasonable definitions of weight for types and measures in $NIP$ theories.

\section{Preliminaries}

The following definition is due to Shelah \cite{Shelah783}, and says that $\kappa_{ict} = \aleph_{0}$. 
\begin{Definition} $T$ is strongly dependent  (or strongly $NIP$) if there DO NOT exist formulas $\phi^{\alpha}(x,y^{\alpha})\in L$ for $\alpha < \omega$ and  
$(b^{\alpha}_{i})_{\alpha<\omega, i < \omega}$  such that for every 
$\eta\in \omega^{\omega}$, the set of formulas
$\{\phi^{\alpha}(x,b^{\alpha}_{\eta(\alpha)}): \alpha < \omega\} \cup 
\{\neg\phi^{\alpha}(x,b^{\alpha}_{i}:\alpha < \omega, i< \omega, i\neq \eta(\alpha)\}$ is consistent.
\end{Definition}  

\begin{Remark} (i) $T$ is strongly $NIP$ then $T$ is $NIP$.
\newline 
(ii) We can relativize the notion {\em strong $NIP$} to a sort $S$ by specifying that the variable $x$ in Definition 2.1 is of sort $S$.
\newline
(iii) In Definition 2.1 we could allow the $\phi^{\alpha}$ to have parameters  (by incorporating the parameters into the $b^{\alpha}$). 
\end{Remark}

\begin{Fact} Assume that $T$ has $NIP$. The following are equivalent (also sort by sort as far as the $x$ variable is concerned).
\newline
(1) $T$ is strongly $NIP$ in the sense of Definition 2.1.
\newline
(2) It is not the case that there exist formulas $\phi_{\alpha}(x,y_{\alpha})$ for $\alpha < \omega$, $b^{\alpha}_{i}$ for $\alpha < \omega$ and $i < \omega$, and $k_{\alpha} < \omega$ for each $\alpha < \omega$ such that
\newline
(i) for each $\alpha$, $\{\phi_{\alpha}(x,b^{\alpha}_{i}):i< \omega\}$ is $k_{\alpha}$-inconsistent, and
\newline
(ii) for each ``path" $\eta \in \omega^{\omega}$, $\{\phi_{\alpha}(x,b^{\alpha}_{\eta(\alpha)}):\alpha < \omega\}$ is consistent.
\newline
(3) Just like (2) but with a further clause
\newline
(iii) for each $\alpha$, the sequence $(b^{\alpha}_{i}:i<\omega)$ is indiscernible.
\end{Fact}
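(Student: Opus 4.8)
The plan is to argue with the negations, i.e.\ with the \emph{existence} of the three kinds of configuration: call (1$'$) a configuration as in Definition~2.1 (an ``ict''-configuration), (2$'$) a configuration $\phi_\alpha,(b^\alpha_i),k_\alpha$ as in clause~(2) of Fact~2.3 (an ``inp''-configuration), and (3$'$) an inp-configuration in which in addition each row $(b^\alpha_i)_i$ is indiscernible. These are the failures of (1), (2), (3) respectively, so one must show (1$'$), (2$'$), (3$'$) are equivalent. That (3$'$) implies (2$'$) is trivial. The remaining work is: (2$'$)$\Rightarrow$(3$'$), an extraction argument, which is exactly ``(3)$\Rightarrow$(2)''; (1$'$)$\Rightarrow$(2$'$), which is ``(2)$\Rightarrow$(1)'' and which I can do by a direct manipulation using $NIP$; and (2$'$)$\Rightarrow$(1$'$), which is ``(1)$\Rightarrow$(2)'', the deep direction, where $NIP$ is essential and which I would take from \cite{Shelah783}, \cite{Adler}.

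For (2$'$)$\Rightarrow$(3$'$): given $\phi_\alpha,(b^\alpha_i),k_\alpha$, apply the standard Ramsey-plus-compactness extraction of \emph{mutually indiscernible} sequences to obtain an array $(c^\alpha_i)_{\alpha,i<\omega}$ with the rows $(c^\alpha_i)_i$ mutually indiscernible and with the property that, for any finite set $w$ of rows and any width $n$, every formula in the type of the corresponding $w$-by-$n$ subarray of $c$ (columns increasing within each row) holds of some subarray of $b$ of the same shape. Then each row of $c$ is indiscernible, giving clause~(iii). The formula $\neg\exists x\,\bigwedge_{t\le k_\alpha}\phi_\alpha(x,y_t)$ holds of every increasing $k_\alpha$-tuple in row $\alpha$ of $b$, hence lies in the type of $(c^\alpha_0,\dots,c^\alpha_{k_\alpha-1})$, so by indiscernibility of that row clause~(i) survives with the same $k_\alpha$. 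For clause~(ii), fix $\eta$ and rows $\alpha_1<\dots<\alpha_n$; mutual indiscernibility lets one move each chosen column $c^{\alpha_t}_{\eta(\alpha_t)}$, one row at a time, to a fixed position, so that consistency of $\{\phi_{\alpha_t}(x,c^{\alpha_t}_{\eta(\alpha_t)}):t\le n\}$ reduces to $\exists x\,\bigwedge_t\phi_{\alpha_t}(x,y_t)$ holding of one fixed tuple across those rows of $b$; and this holds of \emph{every} such tuple by the path-consistency of $b$, hence lies in the limit type. So $(c^\alpha_i)$ is an inp-configuration with indiscernible rows.

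For (1$'$)$\Rightarrow$(2$'$): starting from an ict-configuration, first extract as above (the Definition~2.1 requirement is again of the form ``for every index-choice, a fixed formula of the parameters holds''), so we may assume each row $(b^\alpha_i)_i$ indiscernible. By $NIP$, $\phi^\alpha$ has a finite bound $N_\alpha$ on the number of alternations of $\phi^\alpha(c,\cdot)$ along any indiscernible sequence. Set $\psi_\alpha(x;y,y'):=\phi^\alpha(x,y)\wedge\neg\phi^\alpha(x,y')$ and let the new $\alpha$th row be $d^\alpha_i:=(b^\alpha_{2i},b^\alpha_{2i+1})$; these rows are still indiscernible. If some element realizes $\psi_\alpha$ at $m$ of these positions, then $\phi^\alpha(c,\cdot)$ takes the values $1,0,1,0,\dots,1,0$ at the corresponding $2m$ positions of row $\alpha$, forcing at least $2m-1$ alternations; hence the new row is $k_\alpha$-inconsistent for a suitable $k_\alpha$ bounded in terms of $N_\alpha$. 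Paths survive: a witness for the ict-path $\alpha\mapsto 2\eta(\alpha)$ isolates position $2\eta(\alpha)$ in row $\alpha$, hence satisfies $\psi_\alpha(x,d^\alpha_{\eta(\alpha)})$ for all $\alpha$. So we obtain an inp-configuration (in fact one with indiscernible rows).

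The remaining implication (2$'$)$\Rightarrow$(1$'$) --- producing a Definition~2.1 configuration from an inp-configuration --- is, I expect, the main obstacle, and is the point where $NIP$ is genuinely needed. After passing (by (2$'$)$\Rightarrow$(3$'$)) to indiscernible rows with each $k_\alpha$ least possible, so that each row is also $(k_\alpha-1)$-consistent, one must upgrade ``$k_\alpha$-inconsistent rows plus one-instance-per-row consistency of paths'' to the much stronger ``exactly one instance per row, simultaneously along every path''; the naive fix of conjoining $\phi_\alpha$ over a block of parameters and re-indexing by disjoint blocks does make the new rows $2$-inconsistent, but it asks for several instances per row to be jointly consistent along every path, which an inp-configuration need not supply, so $NIP$ must be invoked to bridge the gap. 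This is exactly the combinatorial core, established for arbitrary depth and with the role of $NIP$ made explicit, in \cite{Adler}, following \cite{Shelah783}, and I would appeal to that rather than reprove it. Finally, since every step above leaves the distinguished variable $x$ in a fixed sort, the equivalences hold ``sort by sort''.
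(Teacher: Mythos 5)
The paper does not actually prove this Fact: it simply cites Propositions 10 and 13 of Adler's paper. Your proposal is therefore more self-contained than the source, and the two implications you argue in detail are both correct: the Ramsey-plus-compactness extraction of a mutually indiscernible array (preserving $k_\alpha$-inconsistency because the inconsistency statement holds of \emph{every} subarray of the right shape, and preserving path-consistency via mutual indiscernibility) is the standard route to (3), and your alternation argument --- passing from an ict-configuration to the formula $\phi^{\alpha}(x,y)\wedge\neg\phi^{\alpha}(x,y')$ evaluated on consecutive pairs, so that $m$ simultaneous solutions force $2m-1$ alternations along an indiscernible row --- is exactly how one derives an inp-configuration from an ict-configuration under $NIP$. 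Your appeal to Adler for the remaining implication is also directed at the right place.

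The one point you have backwards is \emph{where} $NIP$ is needed. The direction you defer as ``the combinatorial core where $NIP$ is genuinely needed'' --- producing an ict-configuration from an inp-configuration --- is Adler's Proposition 10 and holds in an \emph{arbitrary} theory; no $NIP$ is required there. The direction that genuinely requires $NIP$ is precisely the one you proved yourself, namely ict implies inp (Adler's Proposition 13): your own argument invokes the finite alternation bound, which \emph{is} $NIP$, and the implication fails without it --- the random graph admits ict-patterns of every depth (indeed a single formula witnesses $IP$) but has burden $1$, so admits no inp-pattern of depth $2$. This does not create a gap, since every step of your proof is either carried out or correctly cited, but the speculation in your last paragraph about why $NIP$ ``must be invoked to bridge the gap'' for inp implies ict is mistaken and should be dropped.
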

\begin{proof} This is contained in \cite{Adler}. See Propositions 10 and 13 there. But one can make further modifications of (2), for example demanding that each of the sequences $(b^{\alpha}_{i}:i<\omega)$ is indiscernible over the union of the other sequences. (See e.g. \cite{OU}.) Again one can allow parameters in the formulas.
\end{proof}

The following fact, due to Shelah \cite{Shelah863}, will be used. 
\begin{Fact} Suppose that $T$ is strongly dependent. Let $I$ be an (infinite) indiscernible ``sequence"  (i.e. indexed by some totally ordered set with respect to which it is indiscernible). Let $d$ be a finite tuple. Then we can write $I$ as the disjoint union of finitely many singletons and finitely many infinite``convex" subsets $I_{1},..,I_{k}$ such that each $I_{k}$ is indiscernible over $d$. 
\end{Fact}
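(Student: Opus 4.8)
The plan is to prove the contrapositive. Assuming $T$ strongly dependent, suppose that for some infinite indiscernible sequence $I=(a_i:i\in J)$ and finite tuple $d$ there is \emph{no} decomposition of the stated form, and manufacture from this a configuration as in Fact 2.3(2), contradicting Definition 2.1. By a routine stretching argument one may first assume $J$ is a dense linear order without endpoints: pass to an indiscernible $I^*\supseteq I$ indexed by such an order together with a compatible extension $d^*$ of $d$, and note that a good decomposition of $I^*$ over $d^*$ restricts to one of $I$ over $d$. Recall next the basic $NIP$ fact: for each formula $\phi(x,d)$, with $x$ an $m$-tuple of variables of the sort of the $a_i$, there is a \emph{finite} set $C_\phi$ of Dedekind cuts of $J$, of size bounded in terms of the alternation number of $\phi$, such that after $I$ is cut at the points of $C_\phi$ any two increasing $m$-tuples from a common piece with the same position relative to $C_\phi$ satisfy the same instance of $\phi$ over $d$; moreover one may take $C_\phi$ minimal, so that each $c\in C_\phi$ is \emph{essential}, witnessed by increasing tuples $\bar s,\bar t$ from $J$ agreeing in position relative to $C_\phi\setminus\{c\}$ but differing across $c$, with $\models\phi(a_{\bar s},d)\wedge\neg\phi(a_{\bar t},d)$. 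With this vocabulary, $I$ admits a decomposition as in the statement \emph{iff} $C:=\bigcup_\phi C_\phi$ is finite: if $C=\{c_1<\dots<c_n\}$ one cuts $I$ at $C$, pulls out the finitely many leftover singleton points, and is left with finitely many infinite convex pieces, each $d$-indiscernible because inside a piece all increasing tuples of a given length share position relative to every $C_\phi$; the converse is immediate. So it suffices to show that if $C$ is infinite then $T$ is not strongly dependent.

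So assume $C$ infinite, and choose greedily formulas $\phi_\alpha(x_\alpha,y)$ ($y$ the sort of $d$) with pairwise distinct essential cuts $c_\alpha\in C_{\phi_\alpha}$, $\alpha<\omega$. Pigeonholing on the arity of the $x_\alpha$ (which needs a small preliminary reduction, see \cite{Shelah863}, if the arities are a priori unbounded), assume all $x_\alpha$ are one $m$-tuple $x$ and that the flip across $c_\alpha$ always occurs at a fixed coordinate $x_l$. The rows of the pattern will be conjugates of $d$: since $I$ is $\emptyset$-indiscernible, each order-automorphism of $J$ lifts to an automorphism $\hat\sigma$ of $\bar M$ carrying $I$ to itself and acting on indices by $\sigma$, and $\phi_\alpha(x,\hat\sigma(d))$ then expresses ``$\phi_\alpha$ over $d$'' with the cut family $C_{\phi_\alpha}$ transported by $\sigma$. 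Reserve for stage $\alpha$ a subinterval $W_\alpha\subseteq J$ with $W_0\supsetneq W_1\supsetneq\cdots$ in such a way that some order-automorphism supported in $W_\alpha$ moves $c_\alpha$, and set $b^\alpha_i=\hat\sigma^\alpha_i(d)$, where $\sigma^\alpha_i$ is supported in $W_\alpha$ and slides $c_\alpha$ (and its companions in $C_{\phi_\alpha}$ that lie in $W_\alpha$) into the $i$-th of infinitely many pairwise disjoint ``slots'' inside $W_\alpha$.

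For fixed $\alpha$, that $\{\phi_\alpha(x,b^\alpha_i):i<\omega\}$ is $k_\alpha$-inconsistent for a suitable $k_\alpha$ (a function of the alternation number of $\phi_\alpha$) is forced by essentiality together with the alternation bound: one $m$-tuple satisfying too many of these instances would make the truth value of $\phi_\alpha(a_{(\cdot)},d)$ alternate along $I$ more often than $|C_{\phi_\alpha}|$ allows. The crux — and the step I expect to be the real obstacle — is to check that $\{\phi_\alpha(x,b^\alpha_{\eta(\alpha)}):\alpha<\omega\}$ is consistent for \emph{every} path $\eta\in\omega^\omega$. The intended witness is a single $m$-tuple $\bar e$, realized in a saturated extension, whose $l$-th coordinate lies deep inside $\bigcap_\alpha W_\alpha$ and occupies, simultaneously for all $\alpha$, the position dictated by the positive instance $\bar s^\alpha$; this requires choosing the intervals $W_\alpha$ and the slots precisely enough that $\phi_\alpha(x,b^\alpha_{\eta(\alpha)})$ depends only on what happens inside $W_\alpha$, so that the finite-stage satisfiability needed for a compactness argument follows stage by stage from the essential-cut data and is not spoiled by interference between stages. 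This is exactly the point at which strong dependence — the absence of an $\omega$-deep configuration — is being contradicted, and it is where most of the care is needed; in particular a further stretching of $I$ to a ``self-similar'' indiscernible sequence, so that the $c_\alpha$ may be taken genuinely independent, is the natural device for making the patching go through. Granting this, Fact 2.3 yields the desired contradiction and the theorem follows.
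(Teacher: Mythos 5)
The paper does not actually prove this statement: it is quoted as a Fact from Shelah \cite{Shelah863}, so there is no internal proof to compare against. Your overall strategy (stretch the index order to a dense one, isolate for each formula $\phi(\bar x,d)$ a finite set $C_\phi$ of essential cuts, observe that the desired decomposition exists iff $\bigcup_\phi C_\phi$ is finite, and convert infinitely many essential cuts into an ict-pattern contradicting Fact 2.3) is the standard route. But the pattern you build is oriented the wrong way round, and this is a fatal gap rather than a detail to be filled in.

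Concretely: your rows are $\{\phi_\alpha(\bar x,b^\alpha_i):i<\omega\}$ with $\bar x$ a tuple of the sort of $I$ and the $b^\alpha_i$ conjugates of $d$. The claimed $k_\alpha$-inconsistency of such a row does not follow from essentiality of the cut together with the alternation bound, and is false in examples: in a dense linear order take $\phi(x,y)$ to be $x<y$ and $d$ realizing a cut of $I$; the cut is essential, yet $\{x<b^\alpha_i : i<\omega\}$ is consistent. A tuple $\bar e$ satisfying many instances $\phi_\alpha(\bar e,b^\alpha_i)$ only produces, after pulling back by your automorphisms, many \emph{different} tuples satisfying $\phi_\alpha(\cdot,d)$; no alternation of a fixed instance along $I$ is created, so NIP is not contradicted. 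Likewise your path witness, a single $m$-tuple $\bar e$ that ``occupies simultaneously for all $\alpha$ the position dictated by $\bar s^\alpha$,'' cannot exist when the $\bar s^\alpha$ sit in different positions --- and you yourself flag this step as unverified. The repair is to dualize the pattern: the pattern variable must be $y$, the sort of $d$, and row $\alpha$ should consist of the formulas $\psi_\alpha(y)=\phi_\alpha(a_{\bar s},y)\wedge\neg\phi_\alpha(a_{\bar t},y)$ instantiated at pairs of tuples from $I$ witnessing the flip at infinitely many pairwise disjoint translated copies of the slot around $c_\alpha$. Then $k_\alpha$-inconsistency of a row is exactly the NIP alternation bound for $\phi_\alpha(\cdot,d')$ along $I$ applied to a single candidate $d'$; path consistency is witnessed by $\hat\sigma(d)$ for a single order-automorphism $\sigma$ sliding each $c_\alpha$ into its prescribed slot (supported on pairwise disjoint intervals, so the stages do not interfere); and the arity-pigeonholing problem you defer to \cite{Shelah863} disappears, since every row lives in the single variable $y$ regardless of the arities of the $\bar x_\alpha$.
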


In the papers \cite{NIPII}, \cite{NIPIII} we tended to focus very much on Keisler measures over ${\bar M}$ which do not fork over some small set. In the current paper we want to formulate our results in terms of measures over small sets (to be closer to the standard notions in stability theory). I will first briefly recall the relevant theory from the earlier papers, filling in a couple of gaps. I will assume that $T$ has $NIP$ from here on. 

A Keisler measure $\mu_{x}$ (sometimes also written $\mu(x)$) over $A$ is a finitely additive probability measure on the 
Boolean algebra of formulas $\phi(x)$ over $A$ up to equivalence (or of $A$-definable sets in sort $x$). Such $\mu$ 
can be identified with a regular Borel probability measure on the Stone space $S_{x}(A)$ of complete types over $A$ in 
variable $x$. By a global Keisler measure we mean a Keisler measure over $\bar M$.

\begin{Definition} Let $\mu_{x}$ be a Keisler measure over $B$, and let $A\subseteq B$. We say that $\mu$ does not fork over $A$ (or is a nonforking extension of $\mu|A$) if any formula $\phi(x)$ over $B$ with positive $\mu$-measure does not fork over $A$. 
\end{Definition}  

\begin{Remark} (i) It is easy to show, as in the case of types, that if $\mu$ is a Keisler measure over $B$ which does not fork over $A\subseteq B$ then $\mu$ has an extension over any $C\supseteq B$ (in particular over ${\bar M}$) which does not fork over $A$.
\newline
(ii) If $\mu_{x}$ is a Keisler measure over a model $M$, then $\mu$ does not fork over $M$, hence by (i) has a global nonforking extension. 
\end{Remark}

If $\mu_{x}$ is a global Keisler measure and $M_{0}$ is a small model, then the following are equivalent: (i) $\mu$ does not fork over $M_{0}$, (ii) $\mu$ is $Aut({\bar M}/M_{0})$-invariant, (iii) $\mu$ is Borel definable over $M_{0}$. 
The meaning of (iii) is that for any $L$-formula $\phi(x,y)$, and $b\in {\bar M}$, $\mu(\phi(x,b))$ depends in a Borel 
way on $tp(b/M_{0})$ in the sense that the function from $S_{y}(M_{0})$ to $[0,1]$ taking $tp(b/M_{0})$ to 
$\mu(\phi(x,b))$ is  Borel. A global measure $\mu_{x}$ satisfying (i) or (ii) or (iii) for some small $M_{0}$ is 
called {\em invariant}. So the point is that an invariant global Keisler measure $\mu_{x}$ automatically comes 
together with a Borel defining schema (so is a measure analogue of the notion of a parallellism class of stationary 
types in stability or what Hrushovski sometimes calls a ``definable type").  This additional information enables us to form a {\em canonical} ``nonforking" amalgam with any other global Keisler measure $\lambda_{y}$, which we called $\mu_{x}\otimes\lambda_{y}$.  Namely, given a formula $\phi(x,y,c)$ with parameters $c$ in ${\bar M}$, choose a small model $M_{0}$ containing the parameters such that $\mu$ is Borel definable over $M_{0}$, and one sees that the function taking $tp(b/M_{0})$ to $\mu(\phi(x,b,c))$ is a Borel function from $S_{y}(M_{0})$ to $[0,1]$ so can be integrated relative to the measure $\lambda|M_{0}$ on $S_{y}(M_{0})$, the value of which is defined to be $(\mu_{x}\otimes\lambda_{y})(\phi(x,y,c))$.

Among the other constructions associated to an invariant global Keisler measure $\mu_{x}$ are that of a ``Morley sequence"  $\mu^{(\omega)}_{x_{1},x_{2},...}$, a global Keisler measure in variables $x_{1},x_{2},...$  obtained by iterating the canonical nonforking amalgamations of $\mu_{x}$ with itself. It is ``indiscernible" (in obvious senses), and assuming that $\mu$ does not fork over $M_{0}$ also $\mu^{(\omega)}$ does not fork over $M_{0}$. More precisely $\mu^{(n)}$ is defined inductively  by  $\mu^{(n)}_{x_{1},..,x_{n}} =  \mu_{x_{n}}\otimes \mu_{x_{1},..,x_{n-1}}$ and $\mu^{(\omega)}_{x_{1},x_{2},..}$ is the union. 

An important class of invariant global Keisler measures is the class of {\em generically stable} measures, where (global) $\mu_{x}$ is said to be generically stable if it is both finitely satisfiable in and definable over some small model $M_{0}$. Definability here is the strengthening of Borel definability to:  $\mu(\phi(x,b))$ depends in a continuous way on $tp(b/M_{0})$. Among key properties of a global generically stable measure $\mu_{x}$ is that whenever $\mu$ does not fork over small model $M$ then $\mu$ is actually the {\em unique} global nonforking extension of $\mu|M$. Also that for any other invariant global Keisler measure, $\mu_{x}\otimes \lambda_{y} = \lambda_{y}\otimes \mu_{x}$. 

Now we pass to measures $\mu_{x}$ over small sets $A$, mainly models $M_{0}$. Given complete types $p(x)$,  $q(y)$, and $r(x,y)$ over $A$ such that $r$ extends $p\cup q$ we can think of $r$ as an extension of $p$ over $q$ 
or an extension of $q$ over $p$. Considering $r$ as an extension of $p(x)$ over $q(y)$ it is natural to call $r$ a ``nonforking" extension if for some (any) $b$ realizing $q$, $r(x,b)$ does not fork over $A$. Of course because we are used to realizing types, we rather speak of $r(x,b)$ as a nonforking extension of $p(x)$ over $A,b$. 

Now, unless we are very familiar with continuous model theory, we do not really want to talk about realizing measures, but we do want the notion of a nonforking extension of a measure $\mu_{x}$ over another measure $\lambda_{y}$  (both measures over $A$). I will give below a rather forced definition, making use of global measures. And I will leave it to subsequent work to show the agreement of our definition with other more natural ones, coming from continuous logic for example. We will work over (small) models, so as to have the existence of nonforking extensions. 

If $\mu'_{x}$ is a global Keisler measure which does not fork over $M_{0}$, and $\phi(x,y)$ a formula over $M_{0}$, we saw above that the function taking $tp(b/M_{0})$ to $\mu'(\phi(x,b))$ is Borel from $S_{y}(M_{0})$ to $[0,1]$.  We call this function $f_{\mu',\phi}$. 

\begin{Definition} Let $\omega_{x,y}$, $\mu_{x}$, $\lambda_{y}$  be Keisler measures over $M_{0}$ such that $\omega_{x,y}$ extends  $\mu_{x}\cup\lambda_{y}$. We will say that  $\omega$ is a nonforking extension of $\mu_{x}$ 
over $\lambda_{y}$ if  for some global nonforking extension $\mu'$ of $\mu$, for every formula $\phi(x,y)$ over $M_{0}$, $\omega(\phi(x,y)) = \int_{q\in S_{y}(M_{0})}f_{\mu',\phi}(q) d\lambda$. 
\end{Definition}

\begin{Remark} (i) So, $\omega_{x,y}$ is a nonforking extension of $\mu_{x}$ over $\lambda_{y}$ if and only for some global nonforking extension $\mu'$ of $\mu$ and some (any) global extension $\lambda'$ of $\lambda$, $\omega_{x,y} = (\mu'_{x}\otimes \lambda'_{y})|M_{0}$.
\newline
(ii) By Remark 2.6(ii), for any Keisler measures $\mu_{x}$, $\lambda_{y}$ over $M_{0}$, there exists some nonforking extension of $\mu$ over $\lambda$.
\end{Remark}

\begin{Lemma} Let $\mu_{x}$ be a measure over $M_{0}$, and $\phi(x,y)$ a formula over $M_{0}$. Then the following are equivalent:
\newline
(i) for every measure $\omega_{x,y}$ over $M_{0}$ which is a nonforking extension of $\mu_{x}$ (over the restriction of $\omega$ to $y$), $\omega_{x,y}(\phi(x,y)) = 0$.
\newline
(ii) For every global nonforking extension $\mu'_{x}$ of $\mu$ over ${\bar M}$, and $b\in {\bar M}$ $\mu'(\phi(x,b)) = 0$.
\end{Lemma}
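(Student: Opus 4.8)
The plan is to prove both implications directly from Definition 2.8 and Remark 2.9(i); there is essentially no mathematical content beyond unwinding the definitions. First I would record the translation of clause (ii): if $\mu'_x$ is any global nonforking extension of $\mu$ then $\mu'$ is $Aut({\bar M}/M_0)$-invariant, so $\mu'(\phi(x,b))$ depends only on $tp(b/M_0)$ and equals $f_{\mu',\phi}(tp(b/M_0))$; and since ${\bar M}$ is very saturated, every $q\in S_y(M_0)$ is realized in ${\bar M}$. Hence (ii) is precisely the assertion that $f_{\mu',\phi}\equiv 0$ on $S_y(M_0)$ for every global nonforking extension $\mu'$ of $\mu$.

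For the implication (ii)$\Rightarrow$(i), let $\omega_{x,y}$ be a nonforking extension of $\mu_x$ over $\lambda_y:=\omega|y$. By Definition 2.8 there is a global nonforking extension $\mu'$ of $\mu$ with
\[
\omega(\phi(x,y))=\int_{q\in S_y(M_0)}f_{\mu',\phi}(q)\,d\lambda ,
\]
and by the translation above the integrand vanishes identically, so $\omega(\phi(x,y))=0$.

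For (i)$\Rightarrow$(ii) I would argue contrapositively: assume (ii) fails, so there are a global nonforking extension $\mu'$ of $\mu$ and $b\in{\bar M}$ with $\mu'(\phi(x,b))>0$. Set $q=tp(b/M_0)$ and let $\lambda_y$ be the Dirac measure over $M_0$ concentrated on $q$ (equivalently, the Keisler measure attached to the complete type $q$). Choosing any global extension $\lambda'$ of $\lambda$ and putting $\omega_{x,y}=(\mu'_x\otimes\lambda'_y)|M_0$, Remark 2.9(i) shows $\omega$ is a nonforking extension of $\mu_x$ over $\lambda_y=\omega|y$, while Definition 2.8 together with the fact that integration against a point mass is evaluation gives $\omega(\phi(x,y))=f_{\mu',\phi}(q)=\mu'(\phi(x,b))>0$, contradicting (i). The only step that takes any thought is this choice of $\lambda_y$: one must pick a test measure that ``sees'' the particular type $q$ at which $f_{\mu',\phi}$ is positive, and a point mass does exactly that. (Alternatively, starting from the failure of (i) — a nonforking extension $\omega$ with $\omega(\phi(x,y))>0$ — Definition 2.8 produces a $\mu'$ with $\int_{S_y(M_0)} f_{\mu',\phi}\,d\lambda>0$, and since $f_{\mu',\phi}$ is a nonnegative Borel function it is strictly positive at some $q$, realized by some $b\in{\bar M}$, which is the failure of (ii).) Everything else is direct substitution into the definitions.
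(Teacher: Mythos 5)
Your proof is correct and follows essentially the same route as the paper: the paper also declares the equivalence immediate from the definitions and, for (i)$\Rightarrow$(ii), takes the nonforking extension $\omega(x,b)$ over $b$ and reads it as a nonforking extension of $\mu_x$ over $tp(b/M_0)$ --- exactly your point-mass $\lambda_y$. You have merely written out the unwinding of Definition 2.8 in more detail than the paper does.
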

\begin{proof}  The way our definitions are set up this is immediate. For example if (ii) fails then $\mu$ has a nonforking extension $\omega(x,b)$ over $b$ with $\omega(\phi(x,b)) > 0$, and  so $\omega(x,y)$ is a nonforking extension of $\mu_{x}$ over $tp(b/M_{0})$ such that $\omega(\phi(x,y)) > 0$. 
\end{proof}

\begin{Definition}  Let again $\omega_{x,y}$ over $M_{0}$ be an extension of $\mu_{x}$ over $\lambda_{y}$. We will say that  $\omega$ is a strong forking extension  (of $\mu$ over $\lambda$) if for some $L_{M_{0}}$-formula $\phi(x,y)$, $\omega_{x,y}(\phi(x,y)) = 1$  but for every nonforking extension $\omega'_{x,y}$ of $\mu_{x}$ over any $\nu_{y}$, 
$\omega'(\phi(x,y)) = 0$.
\end{Definition}

\begin{Remark} Suppose that $T$ is stable. If $tp(c/M_{0}b)$ forks over $M_{0}$ then it strongly forks over $M_{0}$, in the sense that for some formula $\phi(x,y)$ over $M_{0}$, $\models \phi(c,b)$ but  for every $b'\in {\bar M}$
$\neg\phi(x,b')$ is in every global nonforking extension $p$ of $tp(c/M_{0})$.
\end{Remark}
\begin{proof} By uniqueness and definability of $p$.
\end{proof}

\begin{Lemma}  Suppose $\mu_{x}$ is a generically stable measure over $M_{0}$, and $\lambda_{y}$ is any other measure over $M_{0}$. Then $\mu_{x}$ has a unique nonforking extension $\omega_{x,y}$ over $\lambda_{y}$, which is also the
unique nonforking extension of $\lambda_{y}$ over $\mu_{x}$. We write this unique extension as $\mu_{x}\otimes \lambda_{y}$ or $\lambda_{y} \otimes \mu_{x}$. 
\end{Lemma}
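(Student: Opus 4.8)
The plan is to reduce everything to the two properties of global generically stable measures recalled in Section 2: that a global generically stable $M_{0}$-invariant measure is the \emph{unique} global nonforking extension of its restriction to $M_{0}$, and that such a measure commutes, as regards $\otimes$, with every other invariant global measure. First I would fix notation. Since $\mu_{x}$ is generically stable over $M_{0}$ there is a global generically stable $M_{0}$-invariant measure $\mu'_{x}$ with $\mu'|M_{0}=\mu$, and by the cited uniqueness $\mu'$ is the one and only global nonforking extension of $\mu$ to ${\bar M}$. Recall from Remark 2.8(i) that the nonforking extensions of $\mu_{x}$ over $\lambda_{y}$ are precisely the measures $(\mu'_{x}\otimes\lambda'_{y})|M_{0}$, as $\mu'$ ranges over global nonforking extensions of $\mu$ and $\lambda'$ over global extensions of $\lambda$.

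For existence and uniqueness of the nonforking extension of $\mu_{x}$ over $\lambda_{y}$ I would argue as follows. The only available global nonforking extension of $\mu$ is $\mu'$, so the choice of $\mu'$ in Definition 2.7 is forced. Moreover, by the very construction of $\mu'_{x}\otimes\lambda'_{y}$ recalled above, for an $L_{M_{0}}$-formula $\phi(x,y)$ its value on $\phi$ equals $\int_{q\in S_{y}(M_{0})}f_{\mu',\phi}(q)\,d\lambda$, an integral of the Borel (indeed, by generic stability, continuous) function $f_{\mu',\phi}$ against $\lambda$ itself, so it does not depend on the auxiliary global extension $\lambda'$. Hence there is exactly one candidate, $\omega_{x,y}:=(\mu'_{x}\otimes\lambda'_{y})|M_{0}$, and by Remark 2.8(i) it is a nonforking extension of $\mu_{x}$ over $\lambda_{y}$; that it is a genuine Keisler measure over $M_{0}$ extending $\mu_{x}\cup\lambda_{y}$ is part of the construction of $\otimes$ (for the restriction clauses one just plugs $\phi(x,y)=\psi(x)$ and $\phi(x,y)=\chi(y)$ into the integral and evaluates). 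So $\omega$ is the unique nonforking extension of $\mu_{x}$ over $\lambda_{y}$.

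It remains to identify $\omega$ with the unique nonforking extension of $\lambda_{y}$ over $\mu_{x}$. By the symmetric form of Remark 2.8(i), a nonforking extension of $\lambda_{y}$ over $\mu_{x}$ has the form $(\lambda'_{y}\otimes\mu''_{x})|M_{0}$ for some global nonforking extension $\lambda'$ of $\lambda$ and any global extension $\mu''$ of $\mu$; taking $\mu''=\mu'$ and using that $\mu'$ is generically stable, so that $\lambda'_{y}\otimes\mu'_{x}=\mu'_{x}\otimes\lambda'_{y}$, we get $(\lambda'_{y}\otimes\mu'_{x})|M_{0}=(\mu'_{x}\otimes\lambda'_{y})|M_{0}=\omega$. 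Thus every nonforking extension of $\lambda_{y}$ over $\mu_{x}$ equals $\omega$, which gives both uniqueness on that side and the asserted coincidence, justifying the notation $\mu_{x}\otimes\lambda_{y}=\lambda_{y}\otimes\mu_{x}$.

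I do not expect a serious obstacle: the content is entirely in the two recalled properties of generically stable measures, and the only point requiring a little care is checking that the value of $\mu'_{x}\otimes\lambda'_{y}$ on an $M_{0}$-formula is obtained by integrating over $S_{y}(M_{0})$ against $\lambda$ rather than $\lambda'$, so that uniqueness is not spoiled by the freedom in choosing $\lambda'$; but this is immediate from the way $\otimes$ was defined.
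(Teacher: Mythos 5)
Your proof is correct and follows exactly the route of the paper, whose own proof is a one-line appeal to the same two facts you isolate: uniqueness of the global nonforking extension of a generically stable measure (forcing the choice of $\mu'$ in Definition 2.7), and commutativity of a global generically stable measure with any invariant global measure (giving the identification with the unique nonforking extension of $\lambda_{y}$ over $\mu_{x}$). You have merely written out the details the paper leaves implicit, including the minor point that the integral against $\lambda$ on $S_{y}(M_{0})$ is independent of the auxiliary global extension $\lambda'$.
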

\begin{proof} This is immediate from the definitions together with uniqueness of global nonforking extension of genercially stable measure  plus the fact that a global generically stable measure ``commutes" with any invariant global measure.
\end{proof}

\begin{Remark}  So there is some ambiguity with our notation $\mu_{x}\otimes \lambda_{y}$, depending on whether $\mu, \lambda$ are global types or not.  But if  $\mu_{x}$ over $M_{0}$ is generically stable, $\mu'$ is its unique global nonforking extension, and $\lambda_{y}$ any measure over $M_{0}$, then $\mu_{x}\otimes \lambda_{y}$ agrees with the restriction to $M_{0}$ of  $\mu'_{x}\otimes \lambda'_{x}$  for any  global extension $\lambda'$ of $\lambda$.
\end{Remark}

\begin{Question} What can one say about the symmetry of strong forking, for example when one or both of $\mu_{x}$, $\lambda_{y}$ are generically stable?
\end{Question}

Finally in this section we recall from \cite{NIPIII}:
\begin{Definition} Let $\mu_{x}$ be a Keisler measure over $A$  (where here we allow $x$ to be infinite tuple of variables). Then a tuple $c$ (of same length as $x$) is said to be weakly random over $A$ for $\mu$, if
$\models \neg\phi(c)$, for all formulas $\phi(x)$ over $A$ with $\mu$-measure $0$.
\end{Definition}

Note that, assuming $A$ to be small, such weakly random elements exist, by compactness.

\section{Average measures}

We now discuss a very special class of generically stable measures, namely average measures. 
\begin{Definition}  By an indiscernible segment we mean something of the form $\{a_{i}:i\in [0,1]\}$ which is indiscernible with respect to the usual ordering on $[0,1]$.
\end{Definition}

As pointed out in \cite{NIPIII} such an indiscernible segment $I$ gives rose to a global generically stable measure
$\mu_{I}$:
for any formula (with parameters) $\phi(x)$ the set of $i\in [0,1]$ such that $\models\phi(a_{i})$ is a finite union of intervals and points so has a Lebesgue measure, which we define to be $\mu_{I}(\phi(x))$. Noting that $\mu_{I}$ is both finitely satisfiable in and definable over $I$, we see that $\mu_{I}$ is a global generically stable measure. 

\begin{Definition} (i) By a global average measure we mean something of the for $\mu_{I}$ for $I$ an indiscernible segment.
\newline
(ii) For $M_{0}$ a small model, by a average measure over $M_{0}$ we mean something of the form $\mu_{I}|M_{0}$ where $\mu_{I}$ is a global average measure which does not fork over $M_{0}$ (or is $Aut({\bar M}/M_{0})$-invariant). 
\end{Definition}

\begin{Remark} A generically stable {\em type}  is the same thing as an average measure which happens to be a type.
\end{Remark}

\begin{Lemma}  Suppose $T$ is strongly dependent. Let $I = (a_{i}:i\in [0,1])$ be an indiscernible segment, and suppose that $\phi(x,y)$ is a formula over ${\bar M}$ such that $\mu_{I}(\phi(x,b)) = 0$  for all $b\in {\bar M}$. Then for some $k$, $\{\phi(a_{i},y):i\in [0,1]\}$ is $k$-inconsistent.
\end{Lemma}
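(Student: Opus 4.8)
The plan is to argue by contradiction: suppose that for every $k$, the set $\{\phi(a_i,y):i\in[0,1]\}$ is NOT $k$-inconsistent, while $\mu_I(\phi(x,b))=0$ for all $b\in\bar M$. From the failure of $k$-inconsistency for each $k$, together with the indiscernibility of the segment $I$, I would extract, for each $n<\omega$, finitely many indices $i_1<\dots<i_n$ in $[0,1]$ and a tuple $b_n$ with $\models\phi(a_{i_j},b_n)$ for $j=1,\dots,n$; by indiscernibility of $I$ we may take the $i_j$ to be, say, rational and spread out, and in fact by compactness we can arrange a single formula $\phi(x,y)$ witnessed on arbitrarily large finite subsets of $I$. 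The point of the measure hypothesis $\mu_I(\phi(x,b))=0$ is precisely that no tuple $b$ can satisfy $\phi(a_i,b)$ for $i$ ranging over a set of positive Lebesgue measure; so the set of $i$ with $\models\phi(a_i,b)$ is always Lebesgue-null, hence (being a finite union of intervals and points, as $\phi(x,y)$ has $NIP$) is a finite set whose cardinality is, a priori, unbounded as $b$ varies.

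The heart of the argument is to turn this ``unbounded finite'' behaviour into the array pattern forbidden by strong dependence (Fact 2.3, clause (3), or the formulation with indiscernible rows). First I would replace the index set $[0,1]$ by $\omega$ copies of an indiscernible sequence: using indiscernibility of the segment and compactness, I can find mutually indiscernible sequences $(a^\alpha_i:i<\omega)$ for $\alpha<\omega$ — for instance by taking $\omega$ disjoint subintervals of $[0,1]$ and stretching each to an $\omega$-indexed indiscernible sequence, noting that the whole family remains indiscernible in the appropriate product sense. For each $\alpha$, since $\phi(x,b)$ holds on arbitrarily large finite subsets of the segment but always on a finite (null) set, and since the truth value of $\phi(a^\alpha_i,b)$ along an indiscernible sequence eventually stabilizes (by $NIP$) — and cannot stabilize at ``true'' without contradicting nullity — I can find, for each $\alpha$, a tuple $b^\alpha$ and a single index, say $i=0$ after re-indexing, such that $\models\phi(a^\alpha_0,b^\alpha)$ while $\models\neg\phi(a^\alpha_i,b^\alpha)$ for all $i\neq 0$; moreover I would want this choice uniform enough that $\{\phi(a^\alpha_i,y):i<\omega\}$ is $k_\alpha$-inconsistent for some $k_\alpha$ (this is exactly the content of the measure-zero hypothesis applied along one indiscernible row, since a positive-measure, i.e.\ infinite convex, set of realizations is excluded). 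Then, appealing to Shelah's theorem that in a strongly dependent theory an indiscernible sequence splits into finitely many $d$-indiscernible convex pieces (Fact 2.4), or more directly to the combinatorial criterion of Fact 2.3, the array $(\phi_\alpha,b^\alpha_i)$ — with $\phi_\alpha=\phi$ — would be a witness to $\neg$(strong dependence), contradicting the hypothesis on $T$.

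The main obstacle I anticipate is the uniformity/consistency bookkeeping in the middle step: extracting a \emph{single} formula $\phi$ and \emph{$k_\alpha$-inconsistent} families $\{\phi(a^\alpha_i,y):i<\omega\}$ realized along a \emph{single} path simultaneously for all $\alpha<\omega$, rather than merely getting, for each finite stage, some tuple hitting finitely many points. This is where Fact 2.4 is likely the cleanest tool: apply it to the indiscernible segment $I$ and a finite tuple $d=b$ to learn that for \emph{each} $b$ the set $\{i:\models\phi(a_i,b)\}$, being contained in the union of the finitely many non-constant pieces — and in fact forced to be finite by $\mu_I(\phi(x,b))=0$ — has cardinality bounded in terms of data not depending on $b$ once one passes to a suitable sub-configuration; iterating over $\omega$ mutually indiscernible copies then produces the forbidden $ict$-pattern. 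So the real work is organizing a compactness argument that simultaneously (a) fixes $\phi$, (b) produces $\omega$ mutually indiscernible rows, (c) makes each row $k_\alpha$-inconsistent via the nullity hypothesis, and (d) finds a path through the rows that is consistent — after which Fact 2.3 closes the contradiction.
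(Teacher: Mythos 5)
Your overall strategy --- negate the conclusion and manufacture an ict-pattern contradicting strong dependence via Fact 2.3 --- does not go through, and it is also a detour: the statement has a short direct proof. The concrete gap is in the passage from ``for every $k$ some $k$-element subset of $\{\phi(a_i,y):i\in[0,1]\}$ is consistent'' to an array in which \emph{every} path is consistent. The negation of $k$-inconsistency only hands you, for each $k$, a single tuple $b_k$ realizing \emph{some} $k$ of the instances; since $\phi$ may have parameters $d$ over which $I$ is not indiscernible, you cannot move these witnesses around to make all paths (or even all $k$-element subsets) consistent, and nothing in your construction supplies clause (ii) of Fact 2.3. A second, related gap: you assert that $k_\alpha$-inconsistency of a row ``is exactly the content of the measure-zero hypothesis applied along one indiscernible row''. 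It is not. The measure hypothesis only gives \emph{inconsistency} of the full set of instances indexed by a positive-length convex piece (a common realization $c$ would force $\mu_I(\phi(x,c))>0$); upgrading inconsistency to $k$-inconsistency needs the piece to be indiscernible \emph{over the parameters of $\phi$}, which is precisely what Fact 2.4 supplies. Note also that if you could establish $k_\alpha$-inconsistency of each row you would already be done by pigeonhole, with no need for the ict-pattern at all.

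The paper's proof is exactly this direct argument. Apply Fact 2.4 once, to $I$ and the fixed parameter tuple $d$ of $\phi$ (not to the varying instances $b$), obtaining finitely many singletons and finitely many infinite convex pieces $I_1,\dots,I_m$, each indiscernible over $d$. For each $j$, the set $\{\phi(a_i,y):a_i\in I_j\}$ is inconsistent, since a realization $c$ would give $\mu_I(\phi(x,c))\geq \mathrm{length}(I_j)>0$. By compactness some finite subset is inconsistent, and by indiscernibility of $I_j$ over $d$ every subset of that size is inconsistent, i.e.\ the row is $k_j$-inconsistent. Taking $k$ large enough relative to the $k_j$ and the number of singletons, any $k$ indices must include $k_j$ from a single $I_j$, so the whole family is $k$-inconsistent. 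Your instinct that Fact 2.4 is ``the cleanest tool'' was right; the fix is to apply it to the parameters of the formula rather than to the witnesses, and to drop the contradiction with Fact 2.3 entirely.
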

\begin{proof}  Let $d$ be the parameters from $\phi(x,y)$. Let $I_{1},..,I_{m}$ be given by Fact 2.4. For each $j$,
$\{\phi(a_{i},y):a_{i}\in I_{j}\}$ is clearly inconsistent, as otherwise if it is realized by $c$, then $\mu_{I}(\phi(x,c))$ is at least the length of $I_{j}$. By indiscernibility of $I_{j}$ over the parameter $d$, it follows that for some $k_{j}$, $\{\phi(a_{i},y):a_{i}\in I_{j}\}$ is $k_{j}$-inconsistent. So for suitable $k$  (depending also on the cardinality of the finite set $I\setminus \{I_{1}\cup .. \cup I_{m}\}$), $\{\phi(a_{i},y):a_{i} \in I\}$ is $k$-inconsistent.

\end{proof}

\begin{Remark}  If $\phi(x,y)$ has no parameters then there is of course no need for the ``strong $NIP$" assumption in 3.4, and we have in fact that $\{\phi(a_{i},y):a_{i}\in I\}$ is inconsistent iff $\mu_{I}(\phi(x,b)) = 0$ for all $b$ iff  $\mu_{I}\phi(x,b)) < 1$ for all $b$.
\end{Remark}

\begin{Lemma} Suppose $I$ is an indiscernible segment. Let $\phi(x_{1},..,x_{n})$ be a formula over ${\bar M}$. 
Suppose $\mu_{I}^{(n)}(\phi(x_{1},...,x_{n})) > 0$. Then 
\newline
$\exists^{\infty}x_{n}\in I(\exists^{\infty}x_{n-1}\in I(...(\exists^{\infty}x_{1}\in I (\models \phi(x_{1},..,x_{n}))...)$.
\end{Lemma}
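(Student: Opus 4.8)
The plan is to induct on $n$, peeling off the last variable $x_{n}$ at each step and using a Fubini-type identity for $\mu_{I}^{(n)}$. For $n=1$ there is nothing to do: $\mu_{I}(\phi(x_{1}))>0$ says precisely that $\{i\in[0,1]:\models\phi(a_{i})\}$ has positive Lebesgue measure, hence is infinite, which is exactly the statement $\exists^{\infty}x_{1}\in I(\models\phi(x_{1}))$.

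For the inductive step, assume the statement for $n$ (for every formula over ${\bar M}$ and every indiscernible segment), and let $\phi(x_{1},\dots,x_{n+1})$ be a formula over ${\bar M}$ with $\mu_{I}^{(n+1)}(\phi)>0$. Fix a small model $M_{0}$ containing $I$ and all parameters of $\phi$; then $\mu_{I}$ is $Aut({\bar M}/M_{0})$-invariant and definable over $M_{0}$, and $\mu_{I}^{(n)}$ does not fork over $M_{0}$, hence is invariant. Since $\mu_{I}$ is generically stable it commutes with the invariant global measure $\mu_{I}^{(n)}$, so from $\mu_{I}^{(n+1)}=\mu_{I}\otimes\mu_{I}^{(n)}$ (first factor in the variable $x_{n+1}$) we get
$$\mu_{I}^{(n+1)}\;=\;\mu_{I}^{(n)}\otimes\mu_{I}.$$
Applying the integral description of $\otimes$, with $\mu_{I}^{(n)}$ as the invariant (Borel-definable) left factor and $\mu_{I}$ as the measure integrated against, and writing $f(q)=\mu_{I}^{(n)}(\phi(x_{1},\dots,x_{n},b))$ for $b\models q\in S_{x_{n+1}}(M_{0})$ — a Borel function of $q$ because $\mu_{I}^{(n)}$ is $M_{0}$-invariant — one has
$$\mu_{I}^{(n+1)}(\phi)\;=\;\int_{S_{x_{n+1}}(M_{0})}f(q)\,d(\mu_{I}|M_{0}).$$

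The next step is to observe that $\mu_{I}|M_{0}$, viewed as a regular Borel measure on $S_{x_{n+1}}(M_{0})$, is the pushforward of Lebesgue measure on $[0,1]$ along the (Borel) map $t\mapsto tp(a_{t}/M_{0})$: the two measures agree on every clopen $[\chi]$, $\chi\in L_{M_{0}}$, by the definition of $\mu_{I}$ together with the fact that $\{t:\models\chi(a_{t})\}$ is a finite union of intervals and points. Hence by change of variables
$$\mu_{I}^{(n+1)}(\phi)\;=\;\int_{0}^{1}\mu_{I}^{(n)}(\phi(x_{1},\dots,x_{n},a_{t}))\,dt\;>\;0.$$
Therefore $\{t\in[0,1]:\mu_{I}^{(n)}(\phi(x_{1},\dots,x_{n},a_{t}))>0\}$ has positive Lebesgue measure, hence is infinite; in particular there are infinitely many $a_{t}\in I$ with $\mu_{I}^{(n)}(\phi(x_{1},\dots,x_{n},a_{t}))>0$. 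For each such $t$, the induction hypothesis applied to the formula $\phi(x_{1},\dots,x_{n},a_{t})$ over ${\bar M}$ and to the same segment $I$ gives $\exists^{\infty}x_{n}\in I(\dots\exists^{\infty}x_{1}\in I(\models\phi(x_{1},\dots,x_{n},a_{t}))\dots)$; since this holds for infinitely many $a_{t}\in I$, we conclude $\exists^{\infty}x_{n+1}\in I(\exists^{\infty}x_{n}\in I(\dots))$, which is the assertion for $n+1$. The quantifiers come out in the right order because the coordinate we split off, $x_{n+1}$, is precisely the one carrying the outermost $\exists^{\infty}$.

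I do not expect a genuine obstacle here: the content is just ``Fubini for $\mu_{I}^{(n)}$'' applied one coordinate at a time, and the points that need care are (a) invoking commutativity of $\otimes$ so that $x_{n+1}$ can be separated off last, and (b) identifying $\mu_{I}|M_{0}$ with the Lebesgue pushforward so that the integral over the Stone space $S_{x_{n+1}}(M_{0})$ becomes an ordinary integral over $[0,1]$. Note also that all the induction requires is ``the integrand is positive for infinitely many $t$'', which is weaker than what we actually obtain (a positive-measure set of $t$).
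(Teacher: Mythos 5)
Your proof is correct, but the inductive step runs along a genuinely different track from the paper's. Both arguments peel off the last coordinate $x_{n+1}$, but the paper keeps the product in its defining order $\mu^{(n+1)}=\mu_{x_{n+1}}\otimes\mu^{(n)}$: from $\int \mu_I(\phi(c_1,\dots,c_n,x_{n+1}))\,d\mu^{(n)}>0$ it extracts a type-definable set $Y$ of tuples on which the fibre measure is $\geq\epsilon$, picks a single tuple $\bar c\in Y$ \emph{weakly random} for $\mu^{(n)}$ over $I$ and the parameters, observes that $\phi(\bar c,x_{n+1})$ is satisfied by infinitely many $d\in I$, and then uses weak randomness of $\bar c$ to reflect back and conclude $\mu^{(n)}(\phi(x_1,\dots,x_n,d))>0$ for each such $d$ before invoking the induction hypothesis. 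You instead invoke commutativity of $\otimes$ for the generically stable $\mu_I$ to rewrite the product as $\mu^{(n)}\otimes\mu_I$, identify $\mu_I|M_0$ with the pushforward of Lebesgue measure under $t\mapsto tp(a_t/M_0)$, and obtain an honest integral $\int_0^1\mu^{(n)}(\phi(x_1,\dots,x_n,a_t))\,dt>0$. Your route is more transparent measure-theoretically and actually gives the stronger conclusion ``positive Lebesgue measure many $t$'' at each level; the cost is that it imports the commutativity theorem for generically stable measures and requires a little care with the pushforward step (the clopen sets only generate the Baire $\sigma$-algebra of $S_{x_{n+1}}(M_0)$, so one should check that the Borel-definable integrand is Baire measurable, or use regularity, before changing variables). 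The paper's route uses only the definition of the nonforking product in its given order plus existence of weakly random elements, so it is more self-contained and would survive in settings where the last factor does not commute; the weak-randomness trick is doing the work that Fubini does for you. One small point common to both proofs: ``infinitely many $t$'' is silently converted to ``infinitely many elements $a_t$ of $I$,'' which presupposes the segment is non-constant.
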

\begin{proof}  The converse is also true but I will only prove (by induction) the stated direction. The case $n=1$ is 
immediate by definition of $\mu_{I}$. Suppose true for $n$. Suppose $\mu_{I}(\phi(x_{1},..,x_{n+1})) > 0$. So by definition of 
the ``nonforking product" measure, for some $\epsilon \geq 0$, $Y = \{(c_{1},..,c_{n}): \mu(\phi(c_{1},..,c_{n},x_{n+1}) \geq \epsilon\}$ is 
type-definable over $I$ and the parameters from $\phi$, and) has positive $\mu^{(n)}$-measure. Let ${\bar c}\in Y$ be chosen
weakly random for the restriction of $\mu^{(n)}$ to $I$ together with the parameters in $\phi$. As $\mu(\phi({\bar c},x_{n+1})) > 0$ it is realized by infinitely
many elements of $I$ (in fact an ``interval"), say $Z$. For each $d\in Z$, $\mu^{(n)}(\phi(x_{1},..,x_{n},d)) > 0$  (as the formula holds of the weakly random ${\bar c}$) so we can apply the induction hypothesis.
\end{proof}

\begin{Corollary} Assume $T$ to be strongly dependent. Suppose that $I$ is an indiscernible segment. Let $\phi(x,y)$ be a formula over ${\bar M}$ such that $\mu_{I}(\phi(x,b)) = 0$ for all $b\in {\bar M}$. Then for some $k$, $\mu_{I}^{(k)}(\exists y(\phi(x_{1},y) \wedge ... \wedge\phi(x_{k},y))) = 0$.
\end{Corollary}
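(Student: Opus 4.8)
The plan is to show that the very same $k$ produced by Lemma 3.4 already witnesses the conclusion. First, since $T$ is strongly dependent and $\mu_I(\phi(x,b))=0$ for all $b\in{\bar M}$, Lemma 3.4 gives $k<\omega$ such that $\{\phi(a_i,y):i\in[0,1]\}$ is $k$-inconsistent. Put $\psi(x_1,\dots,x_k):=\exists y(\phi(x_1,y)\wedge\cdots\wedge\phi(x_k,y))$; this is an $L_{\bar M}$-formula (with the same parameters as $\phi$), so $\mu_I^{(k)}(\psi)$ makes sense.

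Suppose toward a contradiction that $\mu_I^{(k)}(\psi(x_1,\dots,x_k))>0$. Then Lemma 3.6 applied to $\psi$ gives
\[ \exists^{\infty} x_k\in I\,(\exists^{\infty} x_{k-1}\in I\,(\cdots(\exists^{\infty} x_1\in I\,(\models\psi(x_1,\dots,x_k)))\cdots)). \]
Peeling the quantifiers from the outside inward, one may choose elements $a_{i_k},a_{i_{k-1}},\dots,a_{i_1}$ of $I$ so that at each stage the chosen element is one of the infinitely many witnesses of the corresponding inner $\exists^{\infty}$ clause and is different from the finitely many elements chosen so far. This produces pairwise distinct $a_{i_1},\dots,a_{i_k}\in I$ with $\models\psi(a_{i_1},\dots,a_{i_k})$, i.e.\ a single $c$ with $\models\phi(a_{i_j},c)$ for $j=1,\dots,k$. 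But this contradicts the $k$-inconsistency of $\{\phi(a_i,y):i\in[0,1]\}$. Hence $\mu_I^{(k)}(\psi)=0$, which is exactly the assertion of the corollary.

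The argument is short once Lemmas 3.4 and 3.6 are available, and no genuinely new idea is needed. The one place to be a bit careful is the extraction of the tuple $(a_{i_1},\dots,a_{i_k})$: what is required is that its coordinates be pairwise distinct, so that the tuple actually violates $k$-inconsistency rather than sitting on a diagonal $\{x_i=x_j\}$ (which carries $\mu_I^{(k)}$-measure $0$ anyway). The nested ``$\exists^{\infty}$'' quantifiers supplied by Lemma 3.6 are precisely what make this choice possible, so this potential difficulty evaporates. An alternative route, avoiding Lemma 3.6, would be to observe directly that $\psi$ holds of no tuple of distinct elements of $I$ and that the diagonals have $\mu_I^{(k)}$-measure zero, but invoking Lemma 3.6 is cleaner and matches the development in this section.
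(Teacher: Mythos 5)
Your proof is correct and is essentially identical to the paper's own argument: take the $k$ from Lemma 3.4, assume $\mu_I^{(k)}$ gives the formula $\exists y(\phi(x_1,y)\wedge\cdots\wedge\phi(x_k,y))$ positive measure, and use Lemma 3.6 to extract $k$ distinct elements of $I$ contradicting $k$-inconsistency. Your extra care about distinctness of the extracted tuple is a sensible explicit remark on a point the paper leaves implicit in the word ``distinct''.
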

\begin{proof}  Let $k$ be given by Lemma 3.4. Let $\theta(x_{1},..,x_{k})$ be the formula  
$(\exists y)(\phi(x_{1},y) \wedge ... \wedge\phi(x_{k},y))) = 0$. If $\mu_{I}^{(k)}(\theta) > 0$ then by Lemma 3.6  we find distinct  $a_{i_1},..,a_{i_k}\in I$ such that $\models \theta(a_{i_1},..,a_{i_k})$, contradicting the choice of $k$.
\end{proof}

Finally in this section we give prove a general proposition crucial for the main theorem.
\begin{Proposition} Suppose that $\mu_{1}(y_{1}),...,.\mu_{n}(y_{n})$ are global Keisler measures, all invariant over a small model $M_{0}$. Let $\mu(y_{1},..,y_{n})$ be the nonforking product $\mu_{1}\otimes .. \otimes \mu_{n}$. Let $B(y_{1},..,y_{n})$ be a Borel set over $M_{0}$ with $\mu$-measure $1$. Then there are sequences  $I_{\alpha} = (b^{\alpha}_{i}:i<\omega)$ for $\alpha = 1,..,n$ such that
\newline
(i) each $I_{\alpha}$ is weakly random for $(\mu_{\alpha})^{(\omega)}|M_{0}$
\newline
(ii) for all  $(c_{1},..,c_{n}) \in I_{1} \times .. \times I_{n}$, $(c_{1},..,c_{n})\in B$.
\end{Proposition}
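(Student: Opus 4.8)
I would prove this by induction on $n$, building the sequences one "level" at a time, and at each stage using the fact that a weakly random tuple for a generically stable measure behaves, for the purposes of Borel sets over $M_0$, like an honest Morley sequence. The base case $n=1$ is essentially the statement that an indiscernible-segment-free version of this already appears implicitly: if $B(y_1)$ has $\mu_1$-measure $1$, then $\mu_1^{(\omega)}$ restricted to $M_0$ concentrates on the Borel set $\{(b_i)_i : b_i \in B \text{ for all } i\}$ (this uses that $\mu_1$ is invariant, so $\mu_1^{(\omega)}$ is well-defined and $M_0$-invariant, and that the event "$b_i \in B$" has measure $1$ at each coordinate, hence the countable intersection has measure $1$), so a weakly random $I_1$ for $\mu_1^{(\omega)}|M_0$ lands inside it; this gives (i) and (ii) with $n=1$.

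**The inductive step.** Suppose the result for $n-1$. Given $B(y_1,\dots,y_n)$ of $\mu$-measure $1$ where $\mu = \mu_1 \otimes \cdots \otimes \mu_n$, write $\mu = \mu_n \otimes (\mu_1 \otimes \cdots \otimes \mu_{n-1})$ (here I use that the nonforking product can be reassociated; if needed I would instead integrate in the order given by the definition of $\otimes$, which is fine since every $\mu_\alpha$ is invariant so all the relevant functions are Borel). By Fubini for the nonforking product — i.e. unravelling the definition $(\nu_{y_n} \otimes \sigma_{\bar y})(B) = \int_{S_{\bar y}(M_0)} \nu(B_{\bar b}) \, d\sigma$ where $B_{\bar b}$ is the section — the set $C(y_1,\dots,y_{n-1}) = \{\bar b : \mu_n(B_{\bar b}) = 1\}$ is Borel over $M_0$ and has $(\mu_1 \otimes \cdots \otimes \mu_{n-1})$-measure $1$. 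Apply the induction hypothesis to $C$ to get weakly random sequences $I_1,\dots,I_{n-1}$ with $I_1 \times \cdots \times I_{n-1} \subseteq C$. Now fix any $(c_1,\dots,c_{n-1}) \in I_1 \times \cdots \times I_{n-1}$: the section $B_{(c_1,\dots,c_{n-1})}$ has $\mu_n$-measure $1$. There are only countably many such tuples $(c_1,\dots,c_{n-1})$ (since each $I_\alpha$ is a countable sequence), so the intersection over all of them of these measure-one sections still has $\mu_n$-measure $1$; call it $B'(y_n)$, a Borel set over $M_0 \cup I_1 \cup \cdots \cup I_{n-1}$. Since $\mu_n$ is invariant over $M_0$ it does not fork over $M_0$, hence over the small set $M_0 \cup I_1 \cup \cdots \cup I_{n-1}$ we still have that $\mu_n^{(\omega)}$ restricted there concentrates on $\{(b_i)_i : b_i \in B'\}$, so a weakly random $I_n$ for $(\mu_n)^{(\omega)}|(M_0 I_1 \cdots I_{n-1})$ lies inside. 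This $I_n$ is in particular weakly random for $(\mu_n)^{(\omega)}|M_0$, giving (i), and by construction every $(c_1,\dots,c_n) \in I_1 \times \cdots \times I_n$ lies in $B$, giving (ii).

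**The main obstacle.** The delicate point is the interchange of "weakly random" with "Morley sequence": a priori $\mu_n^{(\omega)}$ is a measure on $\omega$-tuples, and I need that the Borel set (over the enlarged parameter set) of $\omega$-tuples all of whose entries lie in $B'$ actually has $\mu_n^{(\omega)}$-measure $1$ — i.e. that each coordinate marginal of $\mu_n^{(\omega)}$ is $\mu_n$ and a countable intersection of measure-one events is measure one. Both hold, the first because $\mu_n^{(\omega)}$ is built by iterated nonforking amalgamation so each marginal is $\mu_n$, and the second by countable additivity of a Keisler measure viewed as a genuine regular Borel measure on the Stone space. The only real subtlety worth checking carefully is that $B'$, being over the enlarged parameter set, still meets the hypothesis needed to run this — which is exactly why I insist the enlargement $M_0 \cup I_1 \cup \cdots \cup I_{n-1}$ is small and that $\mu_n$, being invariant over $M_0$, remains non-forking over any small superset, so the relevant nonforking product $\mu_n^{(\omega)}$ over that set exists and the defining integrals make sense. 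The reassociativity of $\bigotimes$ used above is also harmless here since all factors are invariant (indeed in our application they will be average, hence generically stable, measures), so no commutativity beyond what is already recorded is needed — one just integrates coordinate-by-coordinate in whatever order.
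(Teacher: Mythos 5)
Your proof is essentially the paper's: induction on $n$, using Borel definability and the Fubini form of the nonforking product to pass to the measure-one Borel set of tuples all of whose sections have measure one, applying the induction hypothesis, and then realizing a measure-one Borel subset of the space of $\mu^{(\omega)}$-Morley sequences by a point that is simultaneously weakly random. Two small caveats. First, you peel off the \emph{last} factor by writing $\mu=\mu_{n}\otimes(\mu_{1}\otimes\cdots\otimes\mu_{n-1})$; this uses commutativity of $\otimes$, which fails for general invariant measures (it is only guaranteed when a factor is generically stable), whereas the proposition is stated for arbitrary invariant $\mu_{\alpha}$ --- but your own fallback of integrating in the order dictated by the definition (i.e.\ peeling off $\mu_{1}$, so that $I_{1}$ is built last, over $M_{0}\cup I_{2}\cup\cdots\cup I_{n}$) is precisely what the paper does, and it removes the issue. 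Second, the claim that a weakly random $I_{\alpha}$ automatically ``lands inside'' a measure-one Borel set is not literally correct, since weak randomness only guarantees avoidance of measure-zero \emph{definable} sets; what is needed (and what the paper does) is to intersect the Borel set with all measure-one definable sets over the relevant parameters, note that the intersection still has measure one and hence is nonempty, and choose the point there, which is then both weakly random and inside the Borel set.
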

\begin{proof}  We argue by induction on $n$.  For $n=1$, let $x$ be the variable $y_{1}$. Then
the intersection of all the $B(x_{i})$ for $i< \omega$ together with all $M_{0}$-definable set of $\mu_{1}^{(\omega)}$-measure 
$1$ is a Borel subset of the type space over $M_{0}$ in variables $(x_{1},x_{2},..)$ of $\mu_{1}^{(\omega)}$-measure $1$, 
hence contains a point, and any realization is the required $I_{1}$.
\newline
Assume true for $n$. Let $B(y_{1},..,y_{n+1})$ be a Borel set over $M_{0}$ of $\mu$ measure $1$, where $\mu = 
\mu_{1}\otimes... \otimes \mu_{n+1}$). By Borel definability of invariant measures, and the definition of the nonforking product measure, $\{(c_{2},...,c_{n+1}): \mu_{1}(B(y_{1},c_{2},..,c_{n+1})) = 1\}$ is a Borel set $C(y_{2},..,y_{n+1})$ over $M_{0}$ of $(\mu_{2}\otimes ... \otimes \mu_{n+1})$-measure $1$. 
By induction hypothesis we find $I_{2},..,I_{n+1}$ satisfying (i) and (ii) of the Proposition for $C$ in place of $B$. 
Now again let $x$ be the variable $y_{1}$. Consider the countable set of conditions  $B(x_{i},c_{2},..,c_{n+1})$ for $i<\omega$ and  $(c_{2},..,c_{n+1})\in I_{2}\times .. \times I_{n+1}$. The intersection of all of these is a Borel set in variables $(x_{1},x_{2},...)$ which has $\mu_{1}^{(\omega)}$-measure $1$. The intersection of  this 
with the set of all formulas over $M_{0}$ of $\mu_{1}^{(\omega)}$-measure $1$, again has a point, which is the required
$I_{1}$.

\end{proof}

\section{Proof of Theorem 1.1}
We start with
\newline
{\bf Proof of (2) implies (1).}
\newline
Assume that (1) fails. By Fact 2.3, there are $\phi_{\alpha}(x,y_{\alpha})\in L$ for $\alpha < \omega$, $b^{\alpha}_{i}$ for $\alpha < \omega$ and $i < \omega$, and $k_{\alpha} < \omega$ for each $\alpha < \omega$ such that
\newline
(i) for each $\alpha$, $\{\phi_{\alpha}(x,b^{\alpha}_{i}):i< \omega\}$ is $k_{\alpha}$-inconsistent, and
\newline
(ii) for each ``path" $\eta \in \omega^{\omega}$, $\{\phi_{\alpha}(x,b^{\alpha}_{\eta(\alpha)}):\alpha < \omega\}$, and
\newline
(iii) for each $\alpha$, the sequence $(b^{\alpha}_{i}:i< \omega)$ is indiscernible.

\vspace{2mm}
\noindent
By compactness we may find $b^{\alpha}_{i}$ for $\alpha < \omega$ and $i\in [0,1]$ satisfying the analogues of (i), (ii), (iii).
(So in (i) we now have $\eta \in [0,1]^{\omega}$, and in (iii) we have an indiscernible segment.)

Let $I_{\alpha}$ denote the indiscernible segment $(b^{\alpha}_{i}:i\in [0,1])$.
\newline
{\em Claim 1.} For each $\alpha$, for every $b$, $\mu_{I_{\alpha}}(\phi_{\alpha}(b,y_{\alpha})) = 0$.
\begin{proof}  This is immediate from the assumption that $\{\phi(x,b^{\alpha}_{i}):i\in [0,1]\}$ is ($k_{\alpha}$-) inconsistent.
\end{proof}

\vspace{2mm}
\noindent
{\em Claim 2.} For each $n < \omega$, $(\otimes_{\alpha < n}\mu_{I_{\alpha}})(\exists x(\wedge_{\alpha < n}\phi_{\alpha}(x,y_{\alpha}))) = 1$.
\begin{proof} This follows  easily from the definitions (including of the nonforking product) and our assumption (ii) above, that for ANY $i_{0},i_{1},..,i_{n-1}\in [0,1]$ $\{\phi_{\alpha}(x,b^{\alpha}_{i_{\alpha}}):\alpha < n\}$ is consistent.

\end{proof}

Let $M_{0}$ be a small model such that all the $\mu_{I_{\alpha}}$ are $M_{0}$-invariant. Let $\mu_{\alpha}(y_{\alpha})$
 be the restriction of $\mu_{I_{\alpha}}$ to $M_{0}$. So the $\mu_{\alpha}$ are average measures over $M_{0}$ and 
$\otimes_{\alpha < \omega}\mu_{\alpha}$ is the restriction of $\otimes_{\alpha<\omega}\mu_{I_{\alpha}}$ to $M_{0}$.  By 
Claim 2, $(\otimes_{\alpha < n}\mu_{\alpha})(\exists x(\wedge_{\alpha < n}\phi_{\alpha}(x,y_{\alpha}))) = 1$ for all $n$.
Hence there is a measure $\omega_{x,y_{0},y_{1},...}$  such that $\omega(\phi_{\alpha}(x,y_{\alpha})) = 1$ for all $\alpha$, and thus if $\omega_{\alpha}$ denotes the restriction of $\omega$ to $(x,y_{\alpha})$, $\omega_{\alpha}(\phi_{\alpha}(x,y_{\alpha})) = 1$ for all $\alpha$. By Claim 1, and Lemma 2.9, $\omega_{\alpha}(x,y_{\alpha})$ is a strong forking extension of $\mu_{\alpha}(y_{\alpha})$.

\noindent
Claims 1 and 2  show that the $\mu_{\alpha}(y_{\alpha})$ and $\phi_{\alpha}(x,y_{\alpha})$ witness the failure of (2). So we have proved (2) implies (1). 

\vspace{5mm}
\noindent
{\bf Proof of (1) implies (2).}
\newline
Let us assume that $T$ is strongly dependent, and that (2) fails, and aim for a contradiction. The assumption that (2) fails gives average measures $\mu_{\alpha}(y_{\alpha})$ over $M_{0}$ for $\alpha < \omega$ and $\omega_{x,y_{0},y_{1},...}$ over $M_{0}$ extending $\otimes_{\alpha}\mu_{\alpha}$ such that the restriction $\omega_{\alpha}$ to $(x,y_{\alpha})$ is a strong forking extension of $\mu_{\alpha}(y_{\alpha})$ for all $\alpha$.

\noindent
Let $\mu_{\alpha} = \mu_{I_{\alpha}}|M_{0}$ for some  indiscernible segment  $I_{\alpha} = (b_{i}^{\alpha}:i\in [0,1])$.

\noindent
For each $\alpha < \omega$, let $\phi_{\alpha}(x,y_{\alpha})$ be a formula over $M_{0}$ (so possibly with parameters) witnessing the strong forking. Namely $\omega_{\alpha}(\phi_{\alpha}(x,y_{\alpha})) = 1$ but $\omega'(\phi_{\alpha}(x,y_{\alpha})) = 0$ for any nonforking extension $\omega'_{x,y_{\alpha}}$ of $\mu_{\alpha}(y_{\alpha})$. By Lemma 2.9 as well as the fact that $\mu_{I_{\alpha}}$ is the unique global nonforking extension of $\mu_{\alpha}$, it follows that $\mu_{I_{\alpha}}(\phi_{\alpha}(c,y_{\alpha})) = 0$ for all $c\in {\bar M}$, and hence by Lemma 3.4, for each $\alpha$ there is $k_{\alpha}$ such that  $\{\phi_{\alpha}(x,b_{i}^{\alpha}):i\in [0,1]\}$ is $k_{\alpha}$-inconsistent. 

Note that for each $n$, 
\newline
(*) $\otimes_{\alpha < n} \mu_{\alpha}(\exists x(\wedge_{\alpha < n} \phi_{\alpha}(x,y_{\alpha}))) = 1$. 
\newline
We will use this data to construct for each $n$ some $\{d^{\alpha}_{i}:\alpha < n, i< \omega\}$ such that  
$\{\phi_{\alpha}(x,d^{\alpha}_{i}):i<\omega\}$ is $k_{\alpha}$-inconsistent for each $\alpha < n$, and each ``path", 
$\phi_{\alpha}(x,d^{\alpha}_{\eta(\alpha)}):\alpha < n\}$ is consistent. Then by compactness we will obtain a witness to non strong dependence (in the form given in Fact 2.3, and allowing parameters in the $\phi_{\alpha}(x,y_{\alpha})$).  

So fix $n$. Now (*) and Proposition 3.8  give sequences $I_{\alpha}$  for $\alpha < n$  such that  $I_{\alpha}$ is weakly random for $\mu_{\alpha}^{(\omega)}$  (the latter being the same as
$\mu_{I_{\alpha}}^{(\omega)}|M_{0}$), and such that 
$\models \exists x(\phi_{0}(x,c_{0}) \wedge \phi_{1}(x,c_{1}) \wedge .. \wedge \phi_{n-1}(x,c_{n-1}))$  for all $(c_{0},..,c_{n-1}) \in I_{0}\times .. \times I_{n-1}$.  Weak randomness of $I_{\alpha}$ together with 
Corollary 3.7  imply that $\{\phi_{\alpha}(x,c): c\in I_{\alpha}\}$ is $k_{\alpha}$-inconsistent. So we have found $\{d^{\alpha}_{i}:\alpha < n, i < \omega\}$ as required.  
\newline
The proof of Theorem 1.1 is complete.

\end{document}